\newcommand{\PP}{{\mathbb{P}}}
\DeclareMathOperator{\otp}{otp}
\DeclareMathOperator{\acc}{acc}
\DeclareMathOperator{\Col}{Col}
\DeclareMathOperator{\image}{''}
\def\MPB{{\mathbb{P}}}
\def\MQB{{\mathbb{Q}}}
\newtheorem{theorem}{Theorem}[section]
\newtheorem{lemma}[theorem]{Lemma}
\newtheorem{definition}[theorem]{Definition}
\newtheorem{claim}[theorem]{Claim}
\newtheorem{question}[theorem]{Question}
\numberwithin{equation}{section}
\newcommand{\ZFC}{{\rm ZFC}}
\newcommand{\GCH}{{\rm GCH}}
\def\rmark{\mbox{$\rm\bf\rule{0.06em}{1.45ex}\kern-0.05em R$}}
\def\pmark{\mbox{$\rm\bf\rule{0.06em}{1.45ex}\kern-0.05em P$}}
\def\nmark{\mbox{$\rm\bf\rule{0.06em}{1.45ex}\kern-0.05em N$}}
\def\vdash{\mbox{$\rm\| \kern-0.13em -$}}
\def\rmark{\mbox{$\rm\bf\rule{0.06em}{1.45ex}\kern-0.05em R$}}
\def\pmark{\mbox{$\rm\bf\rule{0.06em}{1.45ex}\kern-0.05em P$}}
\def\nmark{\mbox{$\rm\bf\rule{0.06em}{1.45ex}\kern-0.05em N$}}
\def\vdash{\mbox{$\rm\| \kern-0.13em -$}}
\begin{document}
\title[The tree property ]{The tree property on a countable segment of successors of singular cardinals}

\author[ M. Golshani and Y. Hayut]{Mohammad Golshani and Yair Hayut}

\thanks{The first author's research has been supported by a grant from IPM (No. 91030417).}
\begin{abstract}
Starting from the existence of many supercompact cardinals, we construct a model of $ZFC+\GCH$ in which the tree property holds at a countable segment of successor of singular cardinals.
\end{abstract}
\maketitle

\section{Introduction}
Let $\kappa$ be a cardinal. A $\kappa$-tree $T$ is a tree with height $\kappa$ such that for every $\alpha < \kappa$, there are less than $\kappa$ elements in level $\alpha$. Moreover, we will assume always that the tree is normal, namely that for every element $t\in T$ with limit height the branch $\{s\in T\mid s < t\}$ determines $t$, and that for every $t\in T$ there are $s \in T$ above $t$ at arbitrarily high levels.
A branch through $T$ is a subset well-ordered by $<_T$ in ordertype $\kappa.$
As usual $T_\alpha$ denotes the set of all elements in $T$ with height $\alpha$.

\begin{definition}
A $\kappa$-tree $T$ is Aronszajn if it has no branches. We say that the tree property holds at $\kappa$ if there is no $\kappa$-Aronszajn tree.
\end{definition}

In this paper we will be interested in getting the tree property at a certain segment of successors of singular cardinals. In \cite{MagidorShelah96}, Magidor and Shelah showed that the tree property holds (in \ZFC) at the successor of a singular limit of strongly compact cardinals. They also showed that it is consistent relative to a large cardinal assumption, slightly above a huge cardinal, that the tree property holds at $\aleph_{\omega + 1}$. This result was later improved by reducing the large cardinal assumption to $\omega$ many supercompact cardinals by Sinapova \cite{Sinapova2012a} using the diagonal Prikry forcing, and Neeman \cite{Neeman2014} using  the product of ordinary collapse posets. In this paper we extend this result and show that it is consistent, relative to the existence of large cardinals, that the tree property holds at a countable segment of successor of singular cardinals. More precisely, we prove the following:
\begin{theorem}\label{thm: main theorem}
Assume there are $\kappa^{+}$ many supercompact cardinals, where $\kappa$ is a supercompact cardinal. Then for each countable ordinal $\vartheta,$  there exists a model of $ZFC+\GCH$ in which
the tree property holds at all cardinals of the form $\aleph_{\omega \cdot \alpha + 1}$, where  $0 < \alpha < \vartheta$.
\end{theorem}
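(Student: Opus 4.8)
The plan is to realise each target $\aleph_{\omega\cdot\alpha}$ as a singular cardinal of cofinality $\omega$ that is the supremum of an $\omega$-sequence of cardinals still carrying enough supercompactness to drive a branch argument, and then, block by block, to collapse the huge supercompacts down into the region of the small alephs while running a single-block construction in the style of Neeman and Sinapova at each stage. First I would fix, out of the $\kappa^+$ available supercompacts, an increasing sequence organised so that for each successor $\alpha<\vartheta$ there is a fresh block $\langle\kappa^\alpha_n:n<\omega\rangle$ of supercompacts with supremum $\lambda_\alpha$, together with extra supercompacts reserved to remain cofinal in $\lambda_\alpha$ for the limit stages; since $\vartheta<\omega_1$ only countably many are needed, so the supply is ample, and the ambient supercompact $\kappa$ and the remaining supercompacts provide room above the segment for the reflection arguments. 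I would then carry out a global Laver-style preparation making the supercompactness of each reserved $\kappa^\alpha_n$ indestructible under the $<\kappa^\alpha_n$-directed-closed forcing it will meet, while preserving $\GCH$.

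The main forcing is an iteration $\mathbb{R}=\ast_{0<\alpha<\vartheta}\mathbb{R}_\alpha$, where the stage-$\alpha$ forcing $\mathbb{R}_\alpha$ collapses cardinals so that in the end $\lambda_\alpha=\aleph_{\omega\cdot\alpha}$ and $\lambda_\alpha^+=\aleph_{\omega\cdot\alpha+1}$, singularises $\lambda_\alpha$ to cofinality $\omega$ through the diagonal supercompact Prikry machinery (or the Neeman product of collapses), and installs the scaffolding needed to locate a cofinal branch in any $\lambda_\alpha^+$-tree. The governing design constraint is a factorisation $\mathbb{R}\cong\mathbb{R}^{\le\alpha}\ast\mathbb{R}^{>\alpha}$ in which $\mathbb{R}^{\le\alpha}$ lives below $\lambda_\alpha^{++}$ and $\mathbb{R}^{>\alpha}$ is, in $V[\mathbb{R}^{\le\alpha}]$, at least $\lambda_\alpha^+$-closed; the indestructibility preparation is precisely what keeps this closure compatible with the later stages still collapsing and singularising as intended. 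Throughout one tracks the cardinal arithmetic, so that the standard closure and chain-condition bookkeeping, together with ground-model $\GCH$, delivers $\GCH$ in the final model.

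To obtain the tree property at $\aleph_{\omega\cdot\alpha+1}=\lambda_\alpha^+$, I would fix in $V[\mathbb{R}]$ a $\lambda_\alpha^+$-tree $T$ and show it has a cofinal branch. Following the single-block arguments, one uses the supercompacts cofinal in $\lambda_\alpha$: for suitable $n$ take a $\lambda_\alpha^+$-supercompactness embedding $j_n\colon V\to M_n$ with $\crit(j_n)=\kappa^\alpha_n$, lift it through $\mathbb{R}$, and read off from $j_n(T)$ a partial branch $b_n$ of $T$. A branch lemma applied to the quotient of $j_n(\mathbb{R})$ over $\mathbb{R}$ guarantees that each $b_n$ already belongs to $V[\mathbb{R}]$, and the structure of the block forcing lets one amalgamate the system $\langle b_n:n<\omega\rangle$ into a single cofinal branch of $T$, exactly as in Sinapova's extraction. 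The two auxiliary branch lemmas underpinning this --- a chain-condition lemma controlling the low part of $\mathbb{R}^{\le\alpha}$, and a $\GCH$-based closure lemma showing that a $\lambda_\alpha^+$-closed forcing adds no cofinal branch to a $\lambda_\alpha^+$-tree of the model it acts over --- are what make the relevant quotient forcings branch-preserving.

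The principal obstacle is twofold. First, because $\crit(j_n)=\kappa^\alpha_n$ lies below $\lambda_\alpha$ while the later stages act above it, $j_n$ moves the large cardinals of every stage $\beta>\alpha$, so lifting $j_n$ through the tail $\mathbb{R}^{>\alpha}$ requires building master conditions for $j_n(\mathbb{R}^{>\alpha})$ and exploiting the indestructibility and tail-closure in an essential, cumulative way, all while checking that this lifting disturbs neither the singularisation of the later $\lambda_\beta$ nor $\GCH$; this non-interference of the blocks --- that each $\mathbb{R}_\alpha$ behaves, after the earlier stages and beneath the closed tail, exactly as the single-block forcing does in isolation --- is the technical heart. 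Second, the limit stages are delicate: when $\alpha<\vartheta$ is a limit ordinal, $\aleph_{\omega\cdot\alpha}$ is not the supremum of a fresh supercompact block but a limit of the earlier singular targets, so one must ensure that $\aleph_{\omega\cdot\alpha}$ is still approached by an $\omega$-sequence of cardinals retaining enough supercompactness (from the reserved supply) to run the branch argument, or else appeal to a Magidor--Shelah-style argument at such stages. Coordinating the successor and limit stages uniformly, and making the chain-condition and closure estimates line up for all $\alpha<\vartheta$ simultaneously while maintaining $\GCH$, is where the real work of the proof lies.
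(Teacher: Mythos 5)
You have a genuine gap, and it sits exactly at the two places you yourself flag as ``the technical heart'' and ``delicate.'' First, the limit stages: your plan to reserve ``extra supercompacts \ldots{} to remain cofinal in $\lambda_\alpha$'' for limit $\alpha$ is incoherent as stated. Any cardinal below a limit target $\lambda_\alpha$ lies inside or between the earlier blocks, so in the final model it has been made a small aleph by the block collapses; nothing cofinal in $\lambda_\alpha$ can retain supercompactness there, and the fallback ``Magidor--Shelah-style argument'' does not apply either, since $\lambda_\alpha$ is not a limit of strongly compact cardinals in the extension. What the branch argument at a limit target actually requires is that the \emph{entire} collapse pattern below $\lambda_\alpha$ --- which your iteration has already committed to, block by block, for the earlier targets --- be simultaneously realizable as a quotient of a single-block forcing along some $\omega$-sequence of ground-model supercompacts cofinal in $\lambda_\alpha$; your sketch provides no mechanism for this coherence, and it must hold for all limit $\alpha<\vartheta$ at once. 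Second, the lifting argument is underspecified in a way that matters: with $\crit(j_n)=\kappa^\alpha_n$ and $j_n$ only $\lambda_\alpha^+$-supercompact, $j_n[G^{>\alpha}]$ is not an element of $M_n$ (the tail has size far above $\lambda_\alpha^+$), so no master condition for $j_n(\mathbb{R}^{>\alpha})$ is available; one can strengthen the embedding, but then you are reproving Neeman's theorem inside an iteration rather than quoting it, and that is precisely the part you defer rather than do.

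The paper circumvents both problems by avoiding iteration altogether. Starting from a set $S$ of indestructible supercompacts with $\otp S=(\min S)^+$, it forces with a \emph{single product} $\Col(\omega,\rho)\times\Col(\rho^{+},{<}\kappa_0)\times\prod_{i<\vartheta}E(t(i)^{++},t(i+1))$, where $E(\mu,\nu)$ is Shioya's Easton collapse, chosen because of its projection property (Lemma~\ref{lemma: projection Shioya}): $E(\mu_0,\lambda)$ projects onto full-support products $\prod_i E(\mu_i,\nu_i)$. The real work is combinatorial: after Fodor-stabilizing the $\rho_\alpha$ from Theorem~\ref{thm: neeman} and the initial segments of the $s_\alpha$ on a stationary $S'$, one constructs a closed $t\subseteq S'$ of order type $\vartheta$ such that every $\alpha\in\acc(t)$ lies in $S'$ and $s_\alpha\subseteq t$. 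Then for each limit $\alpha$, the initial segment of the product below $\alpha$ is a projection of the one-block Neeman forcing $\MPB_{t(\alpha)}$ along $s_{t(\alpha)}$, the tail is $t(\alpha)^{++}$-distributive so the tree lives in the initial extension, the quotient $\MPB_{t(\alpha)}/G_{t,1}$ is $\rho^+$-closed, and the Magidor--Shelah preservation lemma (Lemma~\ref{lem: preservation lemma}) pulls the branch back. This treats successor and limit targets uniformly, needs no lifting of embeddings through tails and no Prikry machinery (each $t(\alpha)$ already has cofinality $\omega$ in $V$ since $\vartheta<\omega_1$), and preserves \GCH{} trivially --- precisely the pieces missing from your outline.
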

In section 2, we will present and quote some technical lemmas that will be used during the proof of the main theorem. Section 3 is devoted to the proof of Theorem~\ref{thm: main theorem}.

\section{Some preservation lemmas}
When constructing models of the tree property, especially at successor of singular cardinals, it is sometimes easier to prove that a certain $\nu^+$-Aronszajn tree has a cofinal branch in some generic extension of our model. In order to show that this branch exists in the ground model we use preservation lemmas.

The next preservation lemma is due to Magidor-Shelah (see \cite[Theorem 2.1]{MagidorShelah96}).
\begin{lemma}\label{lem: preservation lemma}
Let $\mu$ be a singular cardinal of cofinality $\omega$, and suppose that $\PP$ and $\MQB$ are two forcing notions such that $|\PP|=\rho < \mu$ and
$\MQB$ is $\rho^+$-closed. Assume $\dot{T}$ is a $\mathbb{P}$-name for a $\mu^+$-tree. Then forcing with $\MQB$ over $V^\mathbb{P}$ does not add a branch to $\dot{T}$.
\end{lemma}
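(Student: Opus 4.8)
The plan is to argue by contradiction in the ground model $V$, working with the product $\PP\times\MQB$ (legitimate since $\MQB\in V$). Suppose some condition $(p_0,q_0)$ forces that $\dot{T}$ is a $\mu^+$-tree and that $\dot b$ is a cofinal branch of $\dot T$ which does not belong to $V^\PP$. Since $\mathrm{cf}(\mu)=\omega$, fix an increasing sequence $\langle\mu_n:n<\omega\rangle$ of regular cardinals cofinal in $\mu$ with $\rho<\mu_0$. The eventual contradiction will come from König's theorem, which gives $\mu^{\aleph_0}=\prod_n\mu_n>\mu$, together with the fact that every level of $\dot T$ has size at most $\mu$ in $V^\PP$.

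Observe first that because $|\PP|=\rho<\mu_0$, any splitting of the branch can be arranged below a single $\PP$-condition: whenever $\mu_n$-many tree nodes at a fixed level are each forced into $\dot b$ by some condition, a pigeonhole over the at most $\rho$ possible first coordinates lets us assume (after thinning, using regularity of $\mu_n$) that they share one fixed $p_*\le p_0$, the incompatibility now living purely in the $\rho^+$-closed factor $\MQB$. Using this, I would build by recursion on $n<\omega$ a tree of conditions $\langle q_\sigma:\sigma\in\prod_{k<n}\mu_k\rangle$ in $\MQB$ below $q_0$, together with a strictly increasing sequence of levels $\gamma_n<\mu^+$ and $\PP$-names $\dot t_\sigma$ for nodes of $\dot T_{\gamma_{|\sigma|}}$, so that $q_\sigma$ is decreasing along $\subseteq$, so that $(p_*,q_\sigma)\forces\dot t_\sigma\in\dot b$, and so that for each $\sigma$ the successors $\{\dot t_{\sigma^\frown i}:i<\mu_{|\sigma|}\}$ are forced to be pairwise $<_{\dot T}$-incomparable extensions of $\dot t_\sigma$.

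Granting the recursion, for each $\sigma\in\prod_n\mu_n$ the sequence $\langle q_{\sigma\restriction n}:n<\omega\rangle$ is a decreasing $\omega$-chain in $\MQB$, hence has a lower bound $q_\sigma$ by $\rho^+$-closure (here $\omega\le\rho$). Extending $q_\sigma$ to decide the unique node of $\dot b$ at the limit level $\gamma=\sup_n\gamma_n$, one obtains a $\PP$-name $\dot s_\sigma$ with $(p_*,q_\sigma)\forces\dot s_\sigma\in\dot b\cap\dot T_\gamma$ and $\dot s_\sigma>_{\dot T}\dot t_{\sigma\restriction n}$ for all $n$; by normality $\dot s_\sigma$ is determined by the cofinal chain $\langle\dot t_{\sigma\restriction n}\rangle_n$. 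Since distinct $\sigma,\sigma'$ diverge at some stage, $(p_*,q_\sigma)$ and $(p_*,q_{\sigma'})$ force $\dot s_\sigma\ne\dot s_{\sigma'}$. Interpreting the names $\dot s_\sigma$ by a generic $G$ for $\PP$ containing $p_*$ therefore yields, \emph{in} $V^\PP$, an injection $\sigma\mapsto(\dot s_\sigma)^G$ of $\prod_n\mu_n$ into the single level $T_\gamma$. As $\PP$ is small it preserves $\mathrm{cf}(\mu)=\omega$ and $\mu^+$, so in $V^\PP$ we still have $|\prod_n\mu_n|=\mu^{\aleph_0}>\mu\ge|T_\gamma|$, the desired contradiction.

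The main obstacle is the recursion step, namely securing the $\mu_n$-fold splitting: I must show that below $(p_*,q_\sigma)$ there are, at some level above $\gamma_n$, at least $\mu_n$-many nodes each forced into $\dot b$ by an extension. This is exactly where the hypotheses combine. If instead the set $W_\beta=\{t\in\dot T_\beta:\text{some extension forces }t\in\dot b\}$ had size at most some fixed $\lambda<\mu$ for all large $\beta$, then $\dot b$ would be a cofinal branch of the thin subtree $S=\bigcup_\beta W_\beta\in V^\PP$, and I would need the fact that a $\rho^+$-closed forcing cannot add a new cofinal branch to such a thin tree of height $\mu^+$: a binary-splitting-and-fusion argument of length up to $\rho$, using $\rho^+$-closure to take the requisite lower bounds and the smallness $|\PP|=\rho$ to keep everything below one $\PP$-condition, produces more branch nodes at a limit level than the level can hold, forcing $\dot b\in V^\PP$ and contradicting newness. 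Establishing this thin-tree preservation fact cleanly—reconciling the $\rho^+$-closure of $\MQB$, available in $V$, with the fact that $\dot T$ and $\dot b$ live in $V^\PP$—is the technical heart of the proof, and is precisely what the smallness of $\PP$ is there to make possible.
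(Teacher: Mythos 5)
You should first note that the paper does not prove this lemma at all: it quotes it from Magidor--Shelah \cite[Theorem 2.1]{MagidorShelah96}, so the comparison here is with that standard argument, whose skeleton (work in $V$ with the product $\mathbb{P}\times\mathbb{Q}$, fix $\langle\mu_n\rangle$ cofinal in $\mu$ with $\rho<\mu_0$, build a tree of conditions indexed by $\prod_{k<n}\mu_k$ with $\mu_n$-wide splitting, take lower bounds of the $\omega$-chains by closure, and contradict K\"onig's theorem $\prod_n\mu_n=\mu^{\aleph_0}>\mu\geq|T_\gamma|$) you have reproduced correctly. But the step you yourself flag as ``the technical heart'' is genuinely missing, and the repair you sketch for it does not work. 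If the wide splitting fails below some $q$, you pass to the thin subtree $S=\bigcup_\beta W_\beta\in V^{\mathbb{P}}$ with levels of size $\leq\lambda<\mu$, and you propose to rule out a new branch through $S$ by ``a binary-splitting-and-fusion argument of length up to $\rho$.'' Silver's fusion of length $\rho$ produces only $2^{\rho}$ pairwise distinct branch nodes at a limit level, so it yields a contradiction only when $2^{\rho}>\lambda$ --- and the lemma carries no cardinal arithmetic hypothesis whatsoever: nothing prevents $2^{\rho}\geq\mu>\lambda$ from failing, i.e.\ $\lambda\geq 2^\rho$ is entirely possible, in which case your fusion proves nothing. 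The correct resolution of the thin case uses no forcing properties at all: a tree of height $\mu^+$ with all levels of size $\leq\lambda<\mu$ has at most $\lambda$ cofinal branches (otherwise $\lambda^+$ branches would pairwise split below some $\gamma^*<\mu^+$, since $\cf(\mu^+)>\lambda^+$, overfilling level $\gamma^*$), and moreover its cofinal branches are absolute to every outer model: any branch $b$ of $S$ in the extension consists of nodes that are extendible to all levels, a property computed correctly in $V^{\mathbb{P}}$; a pigeonhole/coherence argument (e.g.\ via a uniform ultrafilter on $\mu^+$) shows every such node already lies on a $V^{\mathbb{P}}$-cofinal branch, and then $b$ must agree with a single old branch above $\gamma^*$, hence $b\in V^{\mathbb{P}}$. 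This pure ZFC fact, not closure of $\mathbb{Q}$, is what forces the sets $W_\beta$ to be wide and makes the $\mu_n$-fold splitting available.

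A second, smaller gap is your handling of the $\mathbb{P}$-coordinate. The one-shot pigeonhole to a single $p_*$ is fine for a single level, but it cannot be sustained through the recursion: at stage $n$ there are $\prod_{k<n}\mu_k$-many (eventually up to $\mu$-many) indices $\sigma$, each of whose splitting witnesses is certified only by some strengthening of $p_*$, and you cannot keep shrinking a single condition $\mu$-many times. The Magidor--Shelah argument instead runs, at each splitting stage, a decreasing recursion of length $\rho=|\mathbb{P}|$ through an enumeration of all of $\mathbb{P}$, using $\rho^+$-closure of $\mathbb{Q}$ to take lower bounds along the way, so that the names $\dot t_\sigma$ are mixed $\mathbb{P}$-names whose pairwise incomparability is forced outright (by $1_{\mathbb{P}}$, or a fixed condition) rather than by ever-changing $p_*$'s. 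This is exactly where the hypothesis that $\mathbb{Q}$ is $\rho^+$-closed, rather than merely countably closed, is spent; in your write-up the $\omega$-length fusion alone would only require countable closure, which is a sign that the use of the hypotheses has not been located correctly. With these two repairs --- the ZFC branch-absoluteness of thin tall trees in place of the Silver fusion, and the length-$\rho$ diagonalization over $\mathbb{P}$ in place of the pigeonhole --- your outline becomes the standard proof.
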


We will also use a variant of the following theorem, due to Neeman, \cite{Neeman2014}:
\begin{theorem}
Let $\langle \mu_n \mid n < \omega\rangle$ be an increasing sequence of indestructible supercompact cardinals. There is $\rho < \mu_0$ such that the forcing $\Col(\omega, \rho) \times \Col(\rho^{+}, {<}\mu_0) \times \prod \Col(\mu_n, {<}\mu_{n+1})$ forces the tree property at $\big(\sup_{n < \omega} \mu_n\big)^+$.
\end{theorem}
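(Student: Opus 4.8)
The plan is to show that in the final model $V[G]$ no $\mu^{+}$-Aronszajn tree survives, where $\mu=\sup_{n<\omega}\mu_n$ is a singular cardinal of cofinality $\omega$ (the collapses keep each $\mu_n$ a cardinal and $\mu$ a limit of them, so $\cf\mu=\omega$ persists). So I would fix a generic $G$ for the product forcing $\PP$, assume toward a contradiction that $T\in V[G]$ is $\mu^{+}$-Aronszajn, and aim to produce a cofinal branch in $V[G]$ itself. The descent device will be Lemma~\ref{lem: preservation lemma}: I will arrange for the branch to appear in a mild outer model $V[G][K]$, with $K$ generic for a highly closed forcing and $T$ a name over a forcing of size $<\mu$, so that the lemma drags the branch back into $V[G]$.

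First I would do the bookkeeping that exposes this configuration. For a suitable $n$, factor $\PP\cong\PP_{<n}\times\PP_{\ge n}$, where $\PP_{<n}=\Col(\omega,\rho)\times\Col(\rho^{+},{<}\mu_0)\times\prod_{m<n}\Col(\mu_m,{<}\mu_{m+1})$ has size $<\mu$ and $\PP_{\ge n}=\prod_{m\ge n}\Col(\mu_m,{<}\mu_{m+1})$ is ${<}\mu_n$-directed closed. Since the two factors commute, I may regard $V[G_{\ge n}]$ as the ground model; over it $T$ is the evaluation by $G_{<n}$ of a $\PP_{<n}$-name $\dot T$. This is exactly the shape Lemma~\ref{lem: preservation lemma} demands, namely a small forcing carrying the tree with a closed forcing acting on the side.

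Next I would manufacture the branch from supercompactness. Because the low collapses turn the $\mu_m$ into successor cardinals in $V[G]$, the embedding cannot be taken there; instead I would fix in $V$ an embedding $j\func V\to M$ witnessing $\mu^{+}$-supercompactness of $\mu_n$, so that $\crit(j)=\mu_n$, ${}^{\mu^{+}}M\subseteq M$, and $j(\mu_n)>\mu^{+}$. One then lifts $j$ through $\PP$: the straddling factor $\Col(\mu_{n-1},{<}\mu_n)$ is sent to the longer collapse $\Col(\mu_{n-1},{<}j(\mu_n))$, whose remainder one covers by a generic for the (closed) quotient, while the tail is handled by the master condition $q^{*}=\bigcup\, j\image G_{\ge n}$, which exists since $j\image G_{\ge n}$ is directed of size $\le\mu<j(\mu_n)$ and $j(\PP_{\ge n})$ is $\mu^{+}$-closed in $M$. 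The indestructibility hypothesis is what keeps the requisite closure and supercompactness available across these steps so the construction goes through. The lift $j\func V[G]\to M[G^{*}]$ lives in an extension $V[G][K]$ with $K$ highly closed, and in $M[G^{*}]$ the tree $j(T)$ has height exceeding $\delta:=\sup(j\image\mu^{+})<j(\mu^{+})$; by normality there is a node of $j(T)$ at level $\delta$, and pulling its predecessors back through $j$ yields a chain in $T$.

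The step I expect to be the main obstacle is showing this chain is genuinely \emph{cofinal}. For a single embedding cofinality can fail, because the levels $T_\alpha$ may have full size $\mu>\crit(j)$, so that $j\image T_\alpha\subsetneq j(T_\alpha)$ and the pulled-back predecessor at level $j(\alpha)$ can be a ``new'' node outside $\ran(j)$. Overcoming this is precisely where $\cf\mu=\omega$ must be exploited, in the manner of Magidor--Shelah: spreading the capturing of nodes across the increasing sequence $\langle\mu_m\rangle$ so that every node is eventually caught, and thereby securing a cofinal branch $b$ of $T$ in $M[G^{*}]\subseteq V[G][K]$ — all while keeping the auxiliary forcing $K$ closed enough to meet the hypothesis of Lemma~\ref{lem: preservation lemma}, which is the delicate part of the closure bookkeeping. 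Granting this, $b$ is a cofinal branch living in $V[G][K]$; since $K$ is a $\rho^{+}$-closed forcing over $V[G_{\ge n}][G_{<n}]$ and $\dot T$ is a name over the size-$<\mu$ forcing $\PP_{<n}$, Lemma~\ref{lem: preservation lemma} shows $K$ adds no branch to $T$. Hence $b\in V[G]$, contradicting that $T$ is Aronszajn, and therefore the tree property holds at $\mu^{+}=\big(\sup_{n<\omega}\mu_n\big)^{+}$.
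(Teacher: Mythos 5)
There is a genuine gap, and it sits exactly where you flag it. First, for orientation: the paper does not prove this statement at all --- it is quoted as Neeman's theorem from \cite{Neeman2014}, and what the paper actually proves is the Easton-collapse variant (Theorem~\ref{thm: neeman}), by verifying the hypotheses of Neeman's abstract \cite[Lemma 3.10]{Neeman2014} via Lemma~\ref{lem: technial lemma} (generic supercompactness by a forcing $\mathbb{R}\in V$ whose $\mu$-fold product $\mathbb{R}^{\mu}$ remains $\mu$-distributive in the extension). Your sketch instead attempts a direct single-embedding lift, and the step you defer with ``granting this'' is not a technical obstacle to be smoothed over --- it is the entire content of the theorem. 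The Magidor--Shelah capture technique you invoke requires cofinally many cardinals below $\mu$ that are still large \emph{in the model where the tree lives}, so that ultrafilters on the levels can catch the pulled-back nodes; here the collapses have destroyed the supercompactness of every $\mu_m$ in $V[G]$, and this is precisely why Neeman replaces that technique with systems of branches, generic embeddings added by forcings $\mathbb{R}\in V$, and the distributivity of $\mathbb{R}^{\mu}$ --- none of which appears in your outline.

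Two further concrete failures. (i) Your descent step violates the hypotheses of Lemma~\ref{lem: preservation lemma}: the small forcing $\PP_{<n}$ carrying the tree contains $\Col(\mu_{n-1},{<}\mu_n)$ and so has size $\mu_n$, whence the lemma demands a $\mu_n^{+}$-closed side forcing; but your $K$ --- the quotient needed to stretch $\Col(\mu_{n-1},{<}\mu_n)$ to $\Col(\mu_{n-1},{<}j(\mu_n))$ --- is only $\mu_{n-1}$-closed, and since $j(\mu_n)>\mu^{+}$ it in fact collapses $\mu$ and $\mu^{+}$ to $\mu_{n-1}$, so no preservation lemma of this shape can pull your branch back into $V[G]$. (ii) Your argument never chooses $\rho$: if it worked it would work uniformly for every $\rho<\mu_0$, whereas the theorem asserts only that \emph{some} $\rho$ works, and the pigeonhole/reflection producing that $\rho$ is an essential ingredient of Neeman's proof; a sketch that makes no use of the existential quantifier on $\rho$ (nor, really, of indestructibility, since you take $j\in V$) is proving a statement stronger than what is known, which should itself signal that the plan cannot be completed as written.
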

Note that the cardinals $\langle \mu_n \mid n < \omega\rangle$, are preserved by the forcing notion $\Col(\rho^{+}, {<}\mu_0) \times \prod \Col(\mu_n, {<}\mu_{n+1})$ and so $\big(\sup_{n < \omega} \mu_n\big)^+$ becomes $\rho^{+\omega+1}$ by this forcing.

As indicated in \cite{Neeman2014}, one can replace the forcing notions  of the form $\Col(\mu, {<}\nu)$ with any $\mu$-closed $\nu$-Knaster forcing notion of cardinality $\nu$. This argument is due to Neeman, and for the completeness of this paper we will show how it works in our case.

We will need to replace the Levy collapse with a forcing that has better projection properties. Let us say that a cardinal $\gamma$ is \emph{strong regular} if $\gamma^{<\gamma}=\gamma.$
\begin{definition}[Shioya \cite{shioya}]
The Easton collapse $E(\mu, \nu)$ is the product with Easton support $\prod_{\mu \leq \gamma < \nu,\ \gamma\text{~strong regular}} \Col(\mu, \gamma)$.
\end{definition}
\begin{lemma}[Shioya \cite{shioya}]
Let $\mu$ be regular and $\nu$ be a Mahlo cardinal. Then $E(\mu, \nu)$ is $\mu$-closed, $\nu$-Knaster and $|E(\mu,\nu)| = \nu$.
\end{lemma}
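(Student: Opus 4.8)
The plan is to verify the three assertions separately, treating the $\nu$-Knaster property as the substantial one. Throughout, write $\supp(p)=\{\gamma : p(\gamma)\neq\emptyset\}$; the Easton condition says $|\supp(p)\cap\tau|<\tau$ for every regular $\tau\le\nu$, and since $\nu$ is regular this already forces every support to be bounded in $\nu$.

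For $\mu$-closure I would take a decreasing sequence $\langle p_\xi : \xi<\delta\rangle$ with $\delta<\mu$ and form the coordinatewise union $p$, with $p(\gamma)=\bigcup_{\xi<\delta}p_\xi(\gamma)$. Each coordinate lies in $\Col(\mu,\gamma)$ because that poset is $<\mu$-closed and $\mu$ is regular, while $\supp(p)=\bigcup_\xi\supp(p_\xi)$ still satisfies the Easton condition: for regular $\tau>\mu$ it is a union of $\delta<\tau$ sets each of size $<\tau$, hence of size $<\tau$ by regularity of $\tau$, and for $\tau\le\mu$ the set $\supp(p)\cap\tau$ is empty since all indices are $\ge\mu$. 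For the size bound I would count supports and then conditions on a fixed support. Since $\gamma$ is strong regular and $\mu\le\gamma$, each factor satisfies $|\Col(\mu,\gamma)|=\gamma^{<\mu}\le\gamma^{<\gamma}=\gamma<\nu$. As $\nu$ is inaccessible, every support $S$ has size $<\nu$, hence is bounded with $\sup=\eta<\nu$, so the conditions on $S$ number at most $\eta^{|S|}<\nu$ by strong limitness; and the supports, being bounded subsets of $\nu$, number at most $\nu^{<\nu}=\nu$. This gives $|E(\mu,\nu)|\le\nu$, and the reverse inequality is clear.

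The heart of the matter is $\nu$-Knaster, where I would exploit that $\nu$ is Mahlo to build a $\Delta$-system of size $\nu$. Given $\{p_\alpha : \alpha<\nu\}$, let $C$ be the club of $\gamma<\nu$ closed under $\alpha\mapsto\sup\supp(p_\alpha)$, so that $\supp(p_\alpha)\subseteq\gamma$ whenever $\alpha<\gamma\in C$. Using $|E(\mu,\gamma)|=\gamma$ for inaccessible $\gamma$, fix a bijection $\pi\colon\nu\to E(\mu,\nu)$ and shrink $C$ to the club of its closure points $\gamma$ with $\pi[\gamma]=E(\mu,\gamma)$. Since $\nu$ is Mahlo, the set $S_0$ of inaccessibles in this club is stationary, and the map $\gamma\mapsto\pi^{-1}(p_\gamma\restriction\gamma)$ is regressive on $S_0$, because the lower part $p_\gamma\restriction\gamma$ lands in $E(\mu,\gamma)$. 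By Fodor there is a stationary $S_1\subseteq S_0$ on which this map is constant, say $p_\gamma\restriction\gamma=q$ with $\supp(q)=R$. For $\gamma<\gamma'$ in $S_1$, closure of $C$ gives $\supp(p_\gamma)\subseteq\gamma'$, whence $\supp(p_\gamma)\cap\supp(p_{\gamma'})\subseteq\supp(p_{\gamma'})\cap\gamma'=R$; together with $R\subseteq\supp(p_\gamma)\cap\supp(p_{\gamma'})$ this yields a $\Delta$-system with root $R$ on which every $p_\gamma$ coincides with $q$. Hence any two of the $p_\gamma$ are compatible, and $S_1$ witnesses $\nu$-Knaster.

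I expect the main obstacle to be precisely the Knaster step, specifically the design of the regressive function: one must press down not merely the root but the entire restriction $p_\gamma\restriction\gamma$, so that the surviving conditions literally agree on $R$ rather than merely sharing its index set. This is where the size computation $|E(\mu,\gamma)|=\gamma$ for inaccessible $\gamma$ and the stationarity of the inaccessibles below $\nu$ are both indispensable: inaccessibility of $\nu$ alone would already yield the chain condition, but the clean $\Delta$-system packaging of the \emph{Knaster} property genuinely uses reflection at stationarily many inaccessibles, i.e. the Mahloness of $\nu$.
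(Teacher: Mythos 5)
The paper does not prove this lemma at all --- it is quoted from Shioya \cite{shioya} --- so there is no in-paper argument to compare against; what you have written is essentially Shioya's standard proof, and it is correct in substance. The $\mu$-closure and cardinality computations are fine, and the Knaster argument correctly identifies the crucial move: pressing down the entire lower part $p_\gamma\restriction\gamma$ (not merely its support) at stationarily many inaccessibles, which is exactly where Mahloness is used. Two small points deserve attention. First, the set $\{\gamma<\nu : \pi[\gamma]=E(\mu,\gamma)\}$ need not be closed: at a singular $\gamma$, the poset $E(\mu,\gamma)$ contains conditions whose support is unbounded in $\gamma$ (e.g.\ of order type $\cf(\gamma)$), so a limit of points of your set can fail to be in it. The standard repair is to take instead the club of $\gamma$ such that $\pi^{-1}(p)<\gamma$ for every condition $p$ whose support is bounded below $\gamma$ (for each bound $\eta<\gamma$ there are fewer than $\nu$ such conditions, by your counting argument, so the relevant function $\nu\to\nu$ exists and its closure points form a club); at an \emph{inaccessible} $\gamma$ in this club, every condition of $E(\mu,\gamma)$ has support of size less than $\gamma$, hence bounded in $\gamma$ by regularity, so $E(\mu,\gamma)\subseteq\pi[\gamma]$ --- and this inclusion, rather than equality, is all that the regressivity of $\gamma\mapsto\pi^{-1}(p_\gamma\restriction\gamma)$ requires. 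Second, your closing aside that inaccessibility of $\nu$ alone ``would already yield the chain condition'' is unjustified and, for the Easton collapse, doubtful: it is the Levy collapse $\Col(\mu,{<}\nu)$ whose $\nu$-c.c.\ needs only inaccessibility of $\nu$, whereas for Easton-support products the Mahloness is exploited already in getting the chain condition, not merely in the Knaster packaging. Neither point affects the correctness of your main argument.
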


\begin{lemma} \label{lemma: projection Shioya}
Let $\langle \mu_i \mid i < \zeta\rangle$ and $\langle \nu_i \mid i < \zeta\rangle$ be increasing sequences of regular cardinals, and assume that $\zeta < \mu_0$ and $\mu_i \leq \nu_i < \mu_{i+1}$. Let $\lambda \geq \sup \mu_i$.

There is a projection from $E(\mu_0, \lambda)$ onto the full support product $\prod_{i < \zeta} E(\mu_i, \nu_i)$.
\end{lemma}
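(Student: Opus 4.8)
The plan is to build the projection coordinate by coordinate, after first rewriting the target as a single Easton-support product over the same index set that $E(\mu_0,\lambda)$ uses. Write $S$ for the class of strong regular cardinals and set $I=S\cap\bigcup_{i<\zeta}[\mu_i,\nu_i)$. Since the intervals $[\mu_i,\nu_i)$ are pairwise disjoint (as $\nu_i<\mu_{i+1}$), every $\gamma\in I$ lies in a unique $[\mu_{i(\gamma)},\nu_{i(\gamma)})$, and the double product $\prod_{i<\zeta}E(\mu_i,\nu_i)$ may be reindexed as a product over $\gamma\in I$ with $\gamma$-th factor $\Col(\mu_{i(\gamma)},\gamma)$. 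First I would check that under this reindexing the support of $\prod_{i<\zeta}E(\mu_i,\nu_i)$ --- full support across the $\zeta$ blocks, Easton support inside each block --- is exactly the Easton support over $I$. One inclusion is immediate (an Easton condition restricts to an Easton condition on each block, and full support across blocks is unconstrained); for the other, if $\delta$ is inaccessible then the coordinates below $\delta$ are distributed among fewer than $\zeta<\mu_0\le\delta$ blocks, each contributing fewer than $\delta$, so by regularity of $\delta$ there are fewer than $\delta$ in total. This is the one place the hypothesis $\zeta<\mu_0$ is essential.

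The heart of the matter is the single-coordinate projection: for strong regular $\gamma$ and regular cardinals $\mu_0\le\mu'\le\gamma$ I claim there is a projection $\rho_\gamma\colon\Col(\mu_0,\gamma)\twoheadrightarrow\Col(\mu',\gamma)$. To get this I would invoke the folklore uniqueness of the collapse: the product $\Col(\mu_0,\gamma)\times\Col(\mu',\gamma)$ is separative and $\mu_0$-closed, has size $\gamma^{<\mu_0}\cdot\gamma^{<\mu'}=\gamma$ (using $\gamma^{<\gamma}=\gamma$), and collapses $\gamma$ to $\mu_0$ via its first factor; hence it is forcing equivalent to $\Col(\mu_0,\gamma)$. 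The second-coordinate map then exhibits $\Col(\mu',\gamma)$ as a complete subforcing, and composing with this equivalence yields the desired $\rho_\gamma$. (In the intended application the ground model satisfies $\GCH$, so $\mu_0^{<\mu_0}=\mu_0$ and the uniqueness theorem applies cleanly.) When $\mu'=\mu_0$ the map $\rho_\gamma$ is the identity.

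Finally I would assemble the projection. Define $\pi\colon E(\mu_0,\lambda)\to\prod_{i<\zeta}E(\mu_i,\nu_i)$ by first restricting a condition to the coordinates in $I$ and then applying $\rho_\gamma$ coordinatewise, i.e.\ $\pi(p)(\gamma)=\rho_\gamma(p(\gamma))$ for $\gamma\in I$. Restriction to $I$ is a projection (restricting an Easton-support condition to a subset of coordinates preserves Easton support, and any Easton-supported target condition can be met by extending on the $I$-coordinates), and a coordinatewise product of projections between two Easton-support products over the same index set is again a projection: order-preservation and the image landing in the target are clear, and given $q\le\pi(p)$ one uses the coordinatewise refinement property of each $\rho_\gamma$ to build $p'\le p$ with $\pi(p')\le q$, changing only the (Easton-many) coordinates where $q$ differs, so that $\mathrm{supp}(p')=\mathrm{supp}(p)\cup\mathrm{supp}(q)$ is still Easton. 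Composing the two maps gives the projection. The main obstacle is the single-coordinate step --- recognizing that the extra collapsing done by $\Col(\mu_0,\gamma)$ (of $\mu'$ down to $\mu_0$) is absorbed, which is exactly what uniqueness of the collapse provides --- together with the verification that full support across the $\zeta$ blocks does not exceed the Easton support of the single large collapse.
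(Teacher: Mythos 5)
Your proof is correct and shares the paper's overall skeleton --- both proceed coordinatewise, matching each strong regular $\gamma \in [\mu_i, \nu_i)$ in the single Easton collapse with the corresponding coordinate of the $i$-th factor of the product --- but you justify the key single-coordinate step by a different lemma. The paper simply fixes a continuous projection $\rho^{\mu_i,\gamma}_{\mu_0}\colon \Col(\mu_0,\gamma)\to\Col(\mu_i,\gamma)$ by citing the folklore fact that $\Col(\mu_0,\gamma)$ projects onto any $\mu_0$-closed forcing of size $\gamma$ when $\gamma^{<\gamma}=\gamma$ (such a projection is built by a direct recursion along $\gamma^{<\mu_0}$, which is also what makes it continuous), whereas you derive it from collapse absorption: $\Col(\mu_0,\gamma)\times\Col(\mu',\gamma)$ is forcing equivalent to $\Col(\mu_0,\gamma)$, then project to the second factor. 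These are two faces of the same folklore, but your route needs one refinement: \emph{forcing equivalence} in the sense of isomorphic Boolean completions only yields a projection into the completion of $\Col(\mu',\gamma)$, or from a dense subset of $\Col(\mu_0,\gamma)$; to get a genuine poset projection --- which matters here, since for the assembled map to land in the Easton-support product you also need $\rho_\gamma(1)=1$, lest $\pi(p)$ acquire full support $I$ --- you should invoke the strong form of absorption, namely that $\Col(\mu_0,\gamma)\times\Col(\mu',\gamma)$ has a dense subset order-isomorphic to $\Col(\mu_0,\gamma)$ with tops matching, and compose that isomorphism with the second-coordinate map. Note also that the projection so obtained is not obviously continuous; the lemma as stated does not ask for continuity, but the paper does use the continuity of its $\rho$'s later, in the proof of the main theorem, to argue that the quotient $\mathbb{R}$ is $\rho^+$-closed, so if you intended this lemma to feed that argument you would want the recursive construction rather than abstract absorption. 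On the other side of the ledger, your support bookkeeping --- that full support across the $\zeta<\mu_0$ blocks amalgamated with Easton support inside each block is again Easton below every inaccessible $\delta$ (at most one block straddles $\delta$, the rest contribute $\zeta<\mu_0\le\delta$ many sets of size $<\delta$), and that the condition $p'$ built in the projection check has Easton support $\supp(p)\cup\supp(q)$ --- is exactly the content the paper compresses into ``easily seen to be a projection,'' and you spell it out correctly.
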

\begin{proof}
For every $\alpha < \beta < \gamma$ regular, $\gamma$ strong regular, there is a continuous projection from $\Col(\alpha, \gamma)$ to $\Col(\beta, \gamma)$ since $\Col(\beta, \gamma)$ is an $\alpha$-closed forcing notion of cardinality $\gamma$. Let $\rho^{\beta, \gamma}_{\alpha}$ be such a projection.

For $p \in E(\mu_0, \lambda),$ set
\[
\pi(p) = \langle  \langle \rho^{\mu_i, \gamma}_{\mu_0}(p (\gamma) ) \mid \gamma \in   [\mu_i, \nu_{i} )\rangle             \mid i < \zeta        \rangle
\]
Then $\pi(p) \in \prod_{i < \zeta} E(\mu_i, \nu_{i})$ and $\pi:E(\mu_0, \lambda) \to  \prod_{i < \zeta} E(\mu_i, \nu_{i})$ is easily seen to be a projection.
\end{proof}

We now prove Neeman's theorem when replacing the Levy collapse by the Easton collapse.

The following technical lemma shows that the conditions of \cite[Lemma 3.10]{Neeman2014} are satisfied.
\begin{lemma}\label{lem: technial lemma}
Let $\kappa$ be indestructible supercompact and let $\mu < \kappa$ be regular. Let $\mathbb{P}_0, \mathbb{P}_1$ be forcing notions, $|\mathbb{P}_0| \leq \mu$, $\mathbb{P}_0$ is $\mu$.c.c., and $\mathbb{P}_1$ is $\kappa$-directed closed. Then in the generic extension by $\mathbb{P}_0 \times E(\mu, \kappa) \times \mathbb{P}_1$, $\kappa$ is generically supercompact by a forcing $\mathbb{R} \in V$ such that $\mathbb{R}^{\mu},$  the $\mu$-fold product of $\mathbb{R}$ with supports of size $< \mu$, is $\mu$-distributive in the generic extension.
\end{lemma}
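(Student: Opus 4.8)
The plan is to fix a target $\lambda$ (say $\lambda\geq|\mathbb{P}_1|$), take in $V$ a $\lambda$-supercompactness embedding $j\colon V\to M$ with $\crit(j)=\kappa$, $j(\kappa)>\lambda$ and ${}^{\lambda}M\subseteq M$, and lift it through the whole product $\mathbb{Q}:=\mathbb{P}_0\times E(\mu,\kappa)\times\mathbb{P}_1$ after forcing with the single tail collapse
\[
\mathbb{R}:=E\big(\mu,[\kappa,j(\kappa))\big)=\prod_{\kappa\leq\gamma<j(\kappa),\ \gamma\text{ strong regular}}\Col(\mu,\gamma),
\]
computed in $M$. The point of isolating $\mathbb{R}$ this way is that it is the only genuinely new forcing needed: I will argue that the images of the other two factors can be generically realized without leaving $V[G]$, so that $\mathbb{R}\in M\subseteq V$ as required.

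I would treat the three factors separately. Since $|\mathbb{P}_0|\leq\mu<\kappa=\crit(j)$ we have $\mathbb{P}_0\in V_\kappa$ and $j(\mathbb{P}_0)=\mathbb{P}_0$, so $j\restriction\mathbb{P}_0=\id$ and I simply reuse $G_0$ for the $\mathbb{P}_0$-coordinate of $j(G)$. For $E(\mu,\kappa)$, every condition has Easton support bounded below the inaccessible $\kappa$, hence lies in $V_\kappa$; thus $j\restriction E(\mu,\kappa)=\id$ and $j''G_E=G_E$. Using $\crit(j)=\kappa$ the product splits as $j(E(\mu,\kappa))=E(\mu,j(\kappa))=E(\mu,\kappa)\times\mathbb{R}$, so a $V[G]$-generic $H$ for $\mathbb{R}$ gives an $M$-generic $G_E\times H$ for $E(\mu,j(\kappa))$ containing $j''G_E$. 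For the $\kappa$-directed closed factor $\mathbb{P}_1$ I would invoke indestructibility: $j$ lifts to $\hat j\colon V[G_1]\to M[G_1^{*}]$ with $G_1^{*}\in V[G_1]$ and $j''G_1\subseteq G_1^{*}$, the generic $G_1^{*}$ being taken below the master condition $q$, a lower bound of the directed set $j''G_1\in M$ (here I use ${}^{\lambda}M\subseteq M$ and $|\mathbb{P}_1|\leq\lambda<j(\kappa)$ together with the $j(\kappa)$-directed closure of $j(\mathbb{P}_1)$). Crucially, because $j(\mathbb{P}_1)$ is ${<}j(\kappa)$-closed it adds no bounded subsets of $j(\kappa)$, so $V_{j(\kappa)}^{M[G_1^{*}]}=V_{j(\kappa)}^{M}$ and $\mathbb{R}$ is the same poset whether computed in $M$ or in $M[G_1^{*}]$; this is what pins $\mathbb{R}$ down inside $V$. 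Assembling the coordinates, $j(G):=G_0\times(G_E\times H)\times G_1^{*}$ is $M$-generic for $j(\mathbb{Q})$ and satisfies $j''G\subseteq j(G)$, so $j$ lifts to $V[G]\to M[j(G)]$ in $V[G][H]$, witnessing generic $\lambda$-supercompactness of $\kappa$; the closure ${}^{\lambda}M[j(G)]\cap V[G][H]\subseteq M[j(G)]$ is a routine verification from ${}^{\lambda}M[G_1^{*}]\subseteq M[G_1^{*}]$ and the fact that the remaining generics are added by forcing of size ${<}j(\kappa)$.

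For the distributivity clause I would use Easton's lemma. The poset $\mathbb{R}$ is $\mu$-closed (it is an Easton collapse $E(\mu,\cdot)$), hence so is the ${<}\mu$-support product $\mathbb{R}^{\mu}$. Then $\mathbb{R}^{\mu}\times E(\mu,\kappa)\times\mathbb{P}_1$ is a product of $\mu$-closed forcings, so it is $\mu$-closed and adds no new ${<}\mu$-sequences over $V$; in particular $\mathbb{R}^{\mu}$ is $\mu$-distributive over $W:=V[G_E\times G_1]$, where it remains $\mu$-closed. Finally, over $W$ the forcing $\mathbb{P}_0$ is still $\mu$-c.c. (the $\mu$-closed forcing $E(\mu,\kappa)\times\mathbb{P}_1$ preserves the $\mu$-c.c.\ by Easton's lemma) and $|\mathbb{P}_0|\leq\mu$, so Easton's lemma applied to the $\mu$-c.c.\ $\mathbb{P}_0$ and the $\mu$-closed $\mathbb{R}^{\mu}$ yields that $\mathbb{R}^{\mu}$ is $\mu$-distributive in $W[G_0]=V[G]$, as desired.

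I expect the main obstacle to be exactly the requirement $\mathbb{R}\in V$ rather than merely $\mathbb{R}\in V[G]$: one must ensure that lifting does not secretly depend on objects (the master condition, the tail generic) that exist only after forcing. This is what forces the three-part bookkeeping above — absorbing $\mathbb{P}_0$ into $G_0$, realizing the $\mathbb{P}_1$-image generic inside $V[G_1]$ via indestructibility, and observing that the ${<}j(\kappa)$-closure of $j(\mathbb{P}_1)$ makes the tail collapse $\mathbb{R}$ absolute between $M$ and $M[G_1^{*}]$ — so that the only genuinely new forcing, $E(\mu,[\kappa,j(\kappa)))$, lies in $V$. The secondary delicacy, the interaction of the small $\mu$-c.c.\ factor $\mathbb{P}_0$ with distributivity, is handled cleanly by the two applications of Easton's lemma.
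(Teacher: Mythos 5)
Your treatment of the $\kappa$-directed closed factor is where the argument breaks. First, the assertion that the master condition $q$ is a lower bound of ``the directed set $j''G_1 \in M$'' is false: the closure ${}^{\lambda}M \subseteq M$ is a statement of $V$, while $j''G_1 \notin V$ --- since $j\restriction \mathbb{P}_1 \in V$ is injective, $j''G_1 \in M \subseteq V$ would give $G_1 = (j\restriction\mathbb{P}_1)^{-1}[j''G_1] \in V$, contradicting genericity (the correct fact is $j''G_1 \in M[G_1]$, using the closure of $M[G_1]$ inside $V[G_1]$). Second, and more fundamentally, even granted a master condition, your claim that an $M$-generic $G_1^{*} \subseteq j(\mathbb{P}_1)$ exists in $V[G_1]$ is unjustified and false in general: $M$ contains up to $j(2^{|\mathbb{P}_1|})$ many dense subsets of $j(\mathbb{P}_1)$, a family whose $V$-cardinality can be as large as $2^{\lambda^{<\kappa}}$, while $|j(\kappa)| \leq \lambda^{<\kappa}$; so the ${<}j(\kappa)$-closure of $j(\mathbb{P}_1)$ cannot be used to diagonalize against all of them, and the $\lambda$-closure of $M$ anyway only applies to sequences lying in $V$. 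Indestructibility does not repair this: it is a black box asserting that \emph{some} supercompactness embedding with domain $V[G_1]$ exists in $V[G_1]$, not that your fixed ground-model $j$ lifts with a generic in $V[G_1]$ --- if that step were correct, Laver indestructibility would hold for every supercompact with no preparation. The honest repair, forcing with $j(\mathbb{P}_1)$ below $q$, would fold that forcing into $\mathbb{R}$, but $q$ depends on $G_1$, so the resulting poset is no longer visibly an element of $V$, which is exactly the constraint you correctly identified as the crux.

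The paper's proof is designed to avoid lifting through $\mathbb{P}_1$ altogether, and this is the genuine difference in route. There one takes mutually generic $G_0, G_1, H$, uses indestructibility only to conclude that $\kappa$ is (fully) supercompact in $V[G_1]$, uses Levy--Solovay to keep it supercompact in $V[G_1][G_0]$, and only then chooses a $\lambda$-supercompactness embedding $j\colon V[G_0][G_1] \to M$ \emph{in that extension}; the order of quantifiers ($j$ chosen after $G_0, G_1$) is what makes the master-condition machinery unnecessary. The only forcing left to lift is $E(\mu,\kappa)$, on which $j$ is the pointwise identity, so any $M[H]$-generic filter for the tail collapse $\mathbb{R}$ (the Easton product of $\Col(\mu,\gamma)$ for $\gamma \in [\kappa, j(\kappa))$ strong regular in $M$) completes the lift --- which is precisely why the conclusion is only \emph{generic} supercompactness, witnessed by exactly this $\mathbb{R}$. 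Your factorization $j(E(\mu,\kappa)) \cong E(\mu,\kappa) \times \mathbb{R}$ and your distributivity argument via Easton's lemma do match the paper (which takes a small shortcut by noting $\mathbb{R}^{\mu} \cong \mathbb{R}$ and that $\mathbb{R}$ stays $\mu$-closed in $V[G_1][H]$), so that part of your write-up survives; the flaw is confined to, but fatal at, the $\mathbb{P}_1$-lifting step.
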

\begin{proof}
Let $G_0 \subseteq \mathbb{P}_0$, $G_1 \subseteq \mathbb{P}_1$ and $H\subseteq E(\mu, \kappa)$ be mutually generic filters. In $V[G_1]$, $\kappa$ is supercompact by indestructibility and in $V[G_1][G_0]$ it is still supercompact by the Levy-Solovay argument.

Let $\lambda \geq \kappa$ be a regular cardinal. Let $j\colon V[G_0][G_1] \to M$ be a $\lambda$-supercompact elementary embedding with critical point $\kappa$. In particular, $\sup j\image \lambda < j(\lambda)$.
$j(E(\mu, \kappa)) \cong E(\mu, \kappa) \times \mathbb{R}$ where $\mathbb{R}$ is the Easton support product of $\Col(\mu, \gamma)$ over every $\gamma \in [\kappa, j(\kappa))$ strong regular in $M$. $\mathbb{R}$ is $\mu$-closed in $V$ and $\mathbb{R}^{\mu} \cong \mathbb{R}$.
Since $j(p) = p$ for every $p\in E(\mu, \kappa)$, in order to extend the elementary embedding $j$ we can pick any $M[H]$-generic filter for $\mathbb{R}$.

$\mathbb{R}$ is $\mu$-closed in $V[G_1][H]$ and therefore it is $\mu$-distributive in $V[G_0][G_1][H]$. Since $\mathbb{R}^\mu\cong \mathbb{R}$ the same holds for $\mathbb{R}^\mu$.
\end{proof}

Let $\langle \kappa_n \mid n < \omega\rangle$ be an $\omega$-sequence of indestructible supercompact cardinals. Let $\mathbb{C} = \prod_{n < \omega} E(\kappa_n^{++}, \kappa_{n+1})$. For every $n < \omega$, \[\mathbb{C} = \mathbb{C} \restriction n \times E(\kappa_{n}^{++}, \kappa_{n + 1}) \times \mathbb{C} \restriction [n + 1, \omega).\] By Lemma \ref{lem: technial lemma}, in the generic extension by $\mathbb{C}$ for every $n > 0$ and for every $\lambda \geq \kappa_n$, there is a forcing notion $\mathbb{R}$ that adds an elementary embedding with critical point $\kappa_n$, discontinuity at $\lambda$ and $\mathbb{R}^{\kappa_{n-1}}$ is $\kappa_{n-1}$-distributive. By the indestructibility of $\kappa_0$, it is still supercompact in the generic extension by $\mathbb{C}$.

We are now ready to apply the general result, \cite[Lemma 3.10]{Neeman2014} and conclude:
\begin{theorem}\label{thm: neeman}
Let $\langle \kappa_n \mid n < \omega\rangle$ be an $\omega$-sequence of indestructible supercompact cardinals. There is $\rho < \kappa_0$ such that the forcing:
\[\Col(\omega, \rho) \times \Col(\rho^{+}, <\kappa_0) \times \prod_{n < \omega} E(\kappa_n^{++}, \kappa_{n+1})\]
forces the tree property at the successor of $\sup \kappa_n$.
\end{theorem}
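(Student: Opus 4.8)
The plan is to derive the theorem from Neeman's general tree-property criterion \cite[Lemma 3.10]{Neeman2014}, all of whose hypotheses have been arranged in the discussion immediately preceding the statement; the work left to do is to identify the cardinal structure produced by the forcing and to check that the generic-supercompactness data required by that criterion are exactly what Lemma~\ref{lem: technial lemma} supplies. Write $\mu = \sup_n \kappa_n$ and $\mathbb{C} = \prod_{n<\omega} E(\kappa_n^{++}, \kappa_{n+1})$.

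First I would record what the forcing does to cardinals. By Shioya's lemma each factor $E(\kappa_n^{++},\kappa_{n+1})$ is $\kappa_n^{++}$-closed, $\kappa_{n+1}$-Knaster and of size $\kappa_{n+1}$, so it preserves $\kappa_n^{++}$ and $\kappa_{n+1}$ while collapsing the interval strictly between them; hence in $V^{\mathbb{C}}$ each $\kappa_{n+1}$ becomes $(\kappa_n^{++})^+$, the sequence $\langle \kappa_n \mid n<\omega\rangle$ is turned into an increasing $\omega$-sequence of successor cardinals with bounded gaps, and $\mu$ is singular of cofinality $\omega$ with successor $\mu^+=(\sup_n \kappa_n)^+$. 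For a suitably chosen $\rho<\kappa_0$ (as in \cite{Neeman2014}), prepending $\Col(\omega,\rho)\times\Col(\rho^+,{<}\kappa_0)$ fixes the bottom of the hierarchy, and, as remarked after Neeman's original theorem in the excerpt, this bottom part is preserved by $\mathbb{C}$ thanks to the Knaster property and the closure of the tails. The projections of Lemma~\ref{lemma: projection Shioya} are what let one realize the various quotients of the Easton collapse that Neeman's framework manipulates, playing the role the factoring of the Levy collapse plays in \cite{Neeman2014}.

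Next I would feed the verified hypotheses into Neeman's lemma. By the paragraph preceding the statement, $\kappa_0$ remains supercompact in $V^{\mathbb{C}}$ by indestructibility, and for every $n>0$ and every $\lambda\ge\kappa_n$ the decomposition $\mathbb{C} = \mathbb{C}\restriction(n-1) \times E(\kappa_{n-1}^{++},\kappa_n) \times \mathbb{C}\restriction[n,\omega)$ lets Lemma~\ref{lem: technial lemma} produce a forcing $\mathbb{R}\in V$ that adds a generic elementary embedding $j$ with $\crit(j)=\kappa_n$, discontinuity at $\lambda$ (so $\sup j\image\lambda<j(\lambda)$), and with $\mathbb{R}^{\kappa_{n-1}}$ being $\kappa_{n-1}$-distributive. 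These are precisely the data demanded by \cite[Lemma 3.10]{Neeman2014}, the only change from Neeman's setting being that the source of the generic supercompactness is the Easton collapse rather than the Levy collapse; applying the lemma yields the tree property at $\mu^+$.

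For orientation I would indicate how the branch is produced and why it descends, this being the heart of the cited lemma. Given a $\mu^+$-tree $T$ in $V^{\mathbb{C}}$, one fixes $\lambda$ large enough to capture $T$ together with a name for it, picks a large $n$, lifts the generic embedding at $\kappa_n$ through $\mathbb{R}$, and reads off a cofinal branch $b$ of $T$ from the node of $j(T)$ lying at level $\mu^+ < j(\mu^+)$, using the discontinuity at $\lambda$. The crux, which I expect to be the main obstacle, is the descent of $b$ to $V^{\mathbb{C}}$: here the $\kappa_{n-1}$-distributivity of $\mathbb{R}^{\kappa_{n-1}}$ is combined with the Magidor--Shelah branch-preservation mechanism of Lemma~\ref{lem: preservation lemma}, since a genuinely new cofinal branch would, via mutually generic copies organized by $\mathbb{R}^{\kappa_{n-1}}$, produce more cofinal branches of the $\mu^+$-tree $T$ than its bounded branching below the singular $\mu$ can support. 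It is exactly at this descent step that the passage from the Levy to the Easton collapse must be checked not to spoil the closure and cardinality bookkeeping of Lemma~\ref{lem: preservation lemma}, and the purpose of Lemma~\ref{lem: technial lemma} is to guarantee that the required distributivity survives so that Neeman's argument goes through verbatim.
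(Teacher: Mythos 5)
Your proposal is correct and takes essentially the same route as the paper: the paper's proof of Theorem~\ref{thm: neeman} consists precisely of decomposing $\mathbb{C}$ into a small part, one Easton factor $E(\kappa_{n-1}^{++},\kappa_n)$, and a $\kappa_n$-directed closed tail, invoking Lemma~\ref{lem: technial lemma} to get generic supercompactness at each $\kappa_n$ ($n>0$) with $\mathbb{R}^{\kappa_{n-1}}$ being $\kappa_{n-1}$-distributive, using indestructibility for $\kappa_0$, and then citing \cite[Lemma 3.10]{Neeman2014} as a black box, exactly as you do. Your added sketch of the internals of Neeman's lemma is extra orientation the paper omits (and its attribution of the descent step to Lemma~\ref{lem: preservation lemma} is only loosely accurate, Neeman's argument using the distributivity of the product directly), but since the lemma is applied as a black box this does not affect correctness.
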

We may note that   the cardinals $\rho^+$ and $\kappa_n, \kappa_n^+, \kappa_n^{++}, n<\omega,$ are preserved
by $\Col(\omega, \rho) \times \Col(\rho^{+}, <\kappa_0) \times \prod_{n < \omega} E(\kappa_n^{++}, \kappa_{n+1}),$
in particular, it forces $\sup \kappa_n = \aleph_\omega.$

\section{Getting tree property at many successor of singular cardinals}
In this section we prove our main theorem 1.2.

Let $S$ be a set of indestructible supercompact cardinals and assume that $\otp S = (\min S)^{+}$. Let $\delta = \sup S$ and let $\kappa_0 = \min S$.
\begin{comment}
\begin{lemma}
There is an unbounded $\sigma$-closed set $D \subseteq \delta$ and a sequence $\langle s_\alpha \mid \alpha\in D\rangle$ such that:
\begin{itemize}
\item Each $s_\alpha$ is an $\omega$-sequence cofinal in $\alpha.$
\item For all $\alpha, \beta \in D \cap S^\delta_\omega, s_\alpha \cap s_\beta$ is an initial segment of both $s_\alpha$ and $s_\beta.$
\item For all $\alpha \in D \cap S^\delta_\omega, \min(s_\alpha)=\min(S)=\kappa_0$.
\end{itemize}
\end{lemma}
\begin{proof}
Let $b\colon S^{<\omega} \to S$ be a bijection. Let $D$ be the set of points of countable cofinality such that for every $\zeta \in D$, $b\restriction \zeta^{<\omega} \colon (S \cap \zeta)^{<\omega} \to S \cap \zeta$ is a bijection. We assume that $b(\emptyset) = \min S$ and that if $t_0 \trianglelefteq t_1$ then $b(t_0) \leq b(t_1)$.

Let us pick a cofinal sequence $r_\alpha\subseteq S$ below each $\alpha \in D$. Let $s_\alpha(n) = b(r_\alpha \restriction n)$.

Let $\alpha, \beta\in D$ and assume that $\gamma \in s_\alpha \cap s_\beta$. Then for some $m,n < \omega, $
\[
b(r_\alpha \restriction n) = s_\alpha(n)=\gamma = s_\beta(m)= b(r_\beta \restriction m).
\]
As $b$ is an injection, $r_\alpha \restriction n=r_\beta \restriction m$ and in particular $m=n$. Then
 \[
 s_\alpha \restriction n = \{ b(r_\alpha \restriction k)  \mid k < n             \}=  \{ b(r_\beta \restriction k)  \mid k < n             \}=s_\beta \restriction n,
 \]
as wanted.
\end{proof}
\end{comment}

Let $D = \acc S \cap S^\delta_\omega,$ where $\acc S$ is the set of accumulation points of $S$ and $S^\delta_\omega=\{\alpha < \delta: cf(\alpha)=\omega       \}$, and let us pick for every $\alpha \in D$ an $\omega$-sequence $s_\alpha \subseteq S$ such that $\sup s_\alpha = \alpha$ and $\kappa_0 = \min s_\alpha$.

For every $\alpha \in D$, by Theorem \ref{thm: neeman}, there is $\rho_\alpha < \kappa_0$ such that the forcing
\[
\MPB_\alpha= \Col(\omega, \rho_\alpha) \times \Col (\rho_\alpha^{+}, {<}\kappa_0) \times \prod E(s_\alpha(n)^{++}, s_\alpha(n+1))
\]
forces the tree property at $\alpha^{+}$. Therefore, there is $\rho < \kappa_0$ and $S' \subseteq D$ stationary, such that for every $\alpha \in S'$,
$\rho_\alpha = \rho$.

There is $n < \omega$ such that for every club $E \subseteq \delta$, $\sup \{s_\alpha(n) \mid \alpha \in S' \cap E\}$ is $\delta$ (where $s_\alpha(n)$ is the $n$-th element in the increasing enumeration of $s_\alpha$). Let $n_0$ be this natural number. Using Fodor's lemma again, we may narrow down $S'$ and fix the first $n_0$ elements in $s_\alpha$ for every $\alpha \in S'$.

\begin{lemma}
Suppose $\vartheta < \omega_1.$ Then there exists a closed $t \subseteq S'$ of order type $\vartheta$ such that
\begin{center}
$\alpha \in acc(t) \Longrightarrow \alpha \in S'$ and $s_\alpha \subseteq t.$
\end{center}
\end{lemma}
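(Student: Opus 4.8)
The plan is to prove the lemma by induction on the order type, in the following strengthened form that feeds the recursion. For a countable ordinal $\mu \ge 1$ and an ordinal $\gamma < \delta$, say that $P(\mu,\gamma)$ holds if for every $\alpha \in S'$ with $\gamma < \alpha$ there is a closed set $t \subseteq S' \cap (\gamma,\alpha]$ with $\max t = \alpha$ and $\otp t = \mu$ such that every $\beta \in \acc(t)$ lies in $S'$ and satisfies $s_\beta \subseteq t$. Throughout I use that the entries of each ladder $s_\alpha$ lie in $S'$ (as the conclusion $s_\alpha \subseteq t \subseteq S'$ in any case demands), so that the entries of a ladder are themselves legitimate ``top points'' to which the hypothesis may be applied. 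The lemma then follows at once: for $\vartheta$ a successor ordinal $P(\vartheta,0)$ gives $t$ directly, while for $\vartheta$ a limit I stack the top-version blocks (see below) cofinally in some $\lambda \le \delta$ and drop the supremum point, which lies outside $t$ and so imposes no ladder condition.

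First I would dispose of the easy cases of $P(\mu,\gamma)$. For $\mu = 1$ one takes $t = \{\alpha\}$, with $\acc(t) = \emptyset$. If $\mu = \nu + 1$ with $\nu$ not a limit ordinal, then the top point $\alpha$ is isolated in $t$ from below, so no ladder condition falls on $\alpha$; I pick an entry $\beta = s_\alpha(k) \in S' \cap (\gamma,\alpha)$, invoke $P(\nu,\gamma)$ to obtain a block of order type $\nu$ with top $\beta$ inside $(\gamma,\beta]$, and adjoin $\alpha$. Since $\nu < \mu$ the inductive hypothesis applies, and the resulting $t$ is closed with all ladder conditions inherited from the block.

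The genuine difficulty is the case $\mu = \nu + 1$ with $\nu$ a limit ordinal. Now the part of $t$ below $\alpha$ has limit order type and must be made cofinal in $\alpha$, so $\alpha \in \acc(t)$ and I am obliged to force $s_\alpha \subseteq t$. The idea is to use the entries of $s_\alpha$ exceeding $\gamma$ (a cofinal $\omega$-subsequence, which after reindexing I again write $\langle s_\alpha(n) \mid n<\omega\rangle$) as a scaffold: choose a strictly increasing sequence of \emph{successor} ordinals $\sigma_n \nearrow \nu$ (successors are cofinal in $\nu$), put $\delta_0 = \sigma_0$ and let $\delta_{n+1}$ be the unique ordinal with $\sigma_n + \delta_{n+1} = \sigma_{n+1}$, so that each $\delta_n$ is again a successor and $\sum_n \delta_n = \nu$. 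Applying $P(\delta_n, s_\alpha(n-1))$, with $s_\alpha(-1) := \gamma$, produces for each $n$ a closed ladder-absorbing block $B_n \subseteq S' \cap (s_\alpha(n-1), s_\alpha(n)]$ of order type $\delta_n$ whose maximum is $s_\alpha(n)$; this is legitimate because $s_\alpha(n) \in S'$ and because $\delta_n < \mu$. Setting $t = \{\alpha\} \cup \bigcup_n B_n$ gives a set of order type $\bigl(\sum_n \delta_n\bigr) + 1 = \nu + 1 = \mu$ with maximum $\alpha$, and $s_\alpha = \{s_\alpha(n) \mid n<\omega\} \subseteq \bigcup_n B_n \subseteq t$, as required.

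The main obstacle is exactly this last step, and two points deserve care. First, every accumulation point of $t$ must be accounted for: those interior to some $B_n$ are handled by that block, each top $s_\alpha(n)$ is handled because $P(\delta_n,\cdot)$ already absorbs $s_{s_\alpha(n)}$ into $B_n$, and the single new accumulation point $\alpha$ is handled by $s_\alpha \subseteq t$; no ladder cascade escapes, since every ladder introduced is absorbed one level down by the inductive hypothesis. Second, the ordinal bookkeeping must genuinely yield \emph{successor} summands $\delta_n$, because a block possessing a maximum cannot have limit order type; this is precisely what the choice of the $\sigma_n$ among successor ordinals secures. Once $P(\mu,\gamma)$ is established for all countable $\mu \ge 1$ and all $\gamma < \delta$, the limit-order-type version needed for the statement follows by the stacking described in the first paragraph.
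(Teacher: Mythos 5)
Your strengthened hypothesis $P(\mu,\gamma)$ is false for every $\mu > \omega$ once $\gamma \geq \kappa_0$, and this breaks the induction exactly where you place the main weight. The paper stipulates that \emph{every} ladder has the same minimum: $\min s_\beta = \kappa_0$ for all $\beta \in D$. Any closed set $t$ of order type greater than $\omega$ has an accumulation point $\beta$, and then the requirement $s_\beta \subseteq t \subseteq (\gamma,\alpha]$ is unsatisfiable, since $\kappa_0 \in s_\beta$ while $\kappa_0 \leq \gamma$. Concretely, in your hard case the blocks $B_n \subseteq (s_\alpha(n-1), s_\alpha(n)]$ must have successor order types $\delta_n$ summing to $\nu$; as soon as $\nu > \omega$ some $\delta_n$ exceeds $\omega$, and then $B_n$ has an accumulation point (e.g.\ its $\omega$-th element) whose ladder reaches down to $\kappa_0 \leq s_\alpha(n-1)$ and so cannot be absorbed into $B_n$. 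Thus the interval-localized recursion never gets past order type $\omega+1$ away from $\kappa_0$. A second, related problem: your anchoring of $P(\delta_n,\cdot)$ at the top point $s_\alpha(n)$ requires ladder entries to lie in $S'$, which the paper does not provide --- it only gives $s_\alpha \subseteq S$, and indeed $s_\alpha(0) = \kappa_0$ is not even an accumulation point of $S$, hence lies outside $D \supseteq S'$. (This also shows the lemma's ``$t \subseteq S'$'' must be read loosely, as $t \subseteq S$ with the $S'$-membership demanded only of accumulation points; your inference that ``the conclusion in any case demands'' entries in $S'$ is therefore not a justification but a sign the literal reading is untenable.)

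The obstruction you ran into --- ladders reaching all the way down to $\kappa_0$ --- is precisely what the paper's preprocessing is designed to neutralize, and your proposal omits all of it. The paper fixes $n_0$ so that $\{ s_\alpha(n_0) \mid \alpha \in S' \cap E \}$ is unbounded in $\delta$ for every club $E$, and then applies Fodor's lemma to shrink $S'$ so that all ladders $s_\alpha$, $\alpha \in S'$, share the \emph{same} first $n_0$ entries. The induction hypothesis is then strengthened differently from yours: every witness $t$ begins with this fixed finite root, and only its \emph{tail}, from position $n_0+1$ onward, can be pushed above any prescribed $\gamma < \delta$. Blocks are never confined to an interval; they all contain the common root, so an accumulation point's ladder can genuinely sit inside its block. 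At limits of limits, the stationarity of $S'$ together with the choice of $n_0$ produces a reflection point $\alpha \in S'$ below which witnesses of all smaller order types with arbitrarily high tails exist and with $s_\alpha(n_0+1)$ above the prescribed bound; one then interleaves such blocks $t_n$ with $s_\alpha$ and takes $t = s_\alpha \cup \bigcup_{n<\omega} t_n$. Your argument uses neither the stationarity of $S'$ nor any analogue of the fixed-root/$n_0$ device, and without some such mechanism the interval-localized strengthening cannot be repaired.
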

\begin{proof}
We prove the claim by induction on $\vartheta$. We strengthen the induction hypothesis to assert also the for every $\gamma < \delta$ there is $t$ such that also the $n_0 + 1$-th element of $t$ is at least $\gamma$ and the first $n_0$ elements are the fixed first $n_0$ elements of the sequences $s_\alpha$ for $\alpha \in S'$.

Let us observe that if $t$ satisfies the conditions of the lemma, then so does any $t'$ such that $t \subseteq t' \subseteq S'$, and $|t' \setminus t| < \aleph_0$ with the same first $n$ elements. Therefore, the interesting cases are when $\vartheta$ is a limit ordinal.

If $\vartheta = \varphi + \omega$, then by the induction hypothesis there is $t'$ of order type $\varphi$, satisfying the conditions of the lemma. Let $\alpha > \sup t'$ in $T$. Take $t = t' \cup \{\sup t'\} \cup s_\alpha$.

Assume that $\vartheta$ is a limit of limit ordinals and let $\gamma_0 < \delta$. Let us pick $\alpha \in S'$ such that for every $\zeta < \vartheta$ and every $\beta < \alpha$ there is $t\subseteq \alpha$ of order type $\zeta + 1$ satisfying the conditions of the lemma, with $t(n_0+1) \geq \beta$. Moreover, $s_\alpha(n_0 + 1) \geq \gamma_0$. Since $T$ is stationary and by the choice of $n_0$ - this is possible. Let $\langle \beta_n \mid n < \omega\rangle$ be the increasing enumeration of $s_\alpha$. Let $\langle \zeta_n \mid n < \omega\rangle$ be a sequence of countable ordinals such that $\sup \zeta_n = \vartheta$.

By induction on $n < \omega$, let $t_n$ be a witness for the conditions of the lemma such that $\otp t_n \geq \zeta_n$, $t_{n+1}(n_0 + 1) > \max t_{n}, s_\alpha(n)$, $t_0(n_0) \geq \gamma_0$ and $t_n \subseteq \alpha$. The sequence $t = s_\alpha \cup \bigcup_{n < \omega} t_n$ has ordertype $\geq \vartheta$ and satisfies the conditions of the lemma, since $\acc t = \{\alpha\} \cup \bigcup_{n < \omega} \acc t_n$.
\end{proof}

We are now ready to prove the main theorem 1.2.
\begin{proof}[Proof of Theorem 1.2]
Let $D$, $\langle s_\alpha \mid \alpha \in D\rangle$ and $t$ be as above.

Let $\MPB_t$ be the forcing notion
\[
\MPB_t= \Col(\omega, \rho) \times \Col(\rho^{+}, {<}\kappa_0) \times \prod_{i < \vartheta} E(t(i)^{++}, t(i+1))
\]
and let $G_t$ be $\MPB_t$-generic over $V$. We show that in $V[G_t],$ the tree property holds at all cardinals of the form $\aleph_{\alpha+1},$ where $\alpha < \vartheta$
is a limit ordinal. The next claim is clear.
\begin{claim}
For each limit ordinal $\alpha < \vartheta, \aleph_{\alpha}^{V[G_t]} = t(\alpha)$ and $\aleph_{\alpha+1}^{V[G_t]} = t(\alpha)^{+}.$
\end{claim}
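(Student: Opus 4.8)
The plan is to read off the cardinal structure of $V[G_t]$ block by block and then count. Write $t(0)=\kappa_0$, and note first that $\Col(\omega,\rho)$ makes $\rho$ countable while $\Col(\rho^+,{<}\kappa_0)$ is $\kappa_0$-c.c.\ and collapses every cardinal in $(\rho^+,\kappa_0)$ to $\rho^+$; since these two factors have size ${<}\kappa_0$ they disturb nothing above $\kappa_0$. Hence $\aleph_1^{V[G_t]}=\rho^+$ and $\aleph_2^{V[G_t]}=\kappa_0=t(0)$.

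For the Easton blocks, fix $i<\vartheta$ and factor $\MPB_t=A_i\times E(t(i)^{++},t(i+1))\times B_i$, where $A_i$ collects the two initial collapses together with $\prod_{j<i}E(t(j)^{++},t(j+1))$ and $B_i=\prod_{j>i}E(t(j)^{++},t(j+1))$. By Shioya's lemma $B_i$ is $t(i+1)^{++}$-closed (its least-closed factor is $E(t(i+1)^{++},t(i+2))$), so it preserves every cardinal $\le t(i+1)^{++}$ and adds no new subsets of $t(i+1)$; likewise $E(t(i)^{++},t(i+1))$ is $t(i)^{++}$-closed and $t(i+1)$-Knaster, so it preserves $t(i),t(i)^+,t(i)^{++}$ and $t(i+1)$ while collapsing every cardinal in $(t(i)^{++},t(i+1))$ onto $t(i)^{++}$. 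Using GCH one checks $|A_i|\le t(i)^+$, so $A_i$ is $t(i)^{++}$-c.c.\ and preserves every cardinal $\ge t(i)^{++}$; it also preserves $t(i)$ and $t(i)^+$ immediately when $i=0$ or $i$ is a successor (the top factor of $A_i$ is then $t(i)$-Knaster), and preserves $t(i)$ as a supremum and $t(i)^+$ by the next paragraph when $i$ is a limit. Granting this, the $V[G_t]$-cardinals in $[t(i),t(i+1))$ are exactly $t(i),t(i)^+,t(i)^{++}$, and $t(i+1)=\bigl((t(i)^{++})^+\bigr)^{V[G_t]}$.

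The one genuinely delicate point is the preservation of $(t(\lambda)^+)^V$ when $\lambda<\vartheta$ is a limit, for then $t(\lambda)$ is singular of cofinality $\omega$ and $A_\lambda$ has size $t(\lambda)^+$, so it is \emph{not} $t(\lambda)^+$-c.c.; this is exactly the Neeman-style preservation of the successor of the singular. Here I would argue directly, it sufficing to treat $R=\prod_{j<\lambda}E(t(j)^{++},t(j+1))$. For every $N<\lambda$ the tail $R_{[N,\lambda)}=\prod_{N\le j<\lambda}E(t(j)^{++},t(j+1))$ is $t(N)^{++}$-closed and remains so over $V[R_{<N}]$ (lower bounds of short descending sequences are computed coordinatewise), while $|R_{<N}|\le t(N)^+$; hence every sequence of ordinals of length ${<}t(N)^{++}$ lying in $V[R]$ already lies in $V[R_{<N}]$. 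Now suppose $h\colon t(\lambda)\to t(\lambda)^+$ were a surjection in $V[R]$ and fix $\langle N_k\mid k<\omega\rangle$ with $N_k\nearrow\lambda$. Each $h\restriction t(N_k)$ reflects into $V[R_{<N_k}]$, a forcing extension of size ${<}t(\lambda)$ in which $t(\lambda)^+$ is still regular, so $\beta_k:=\sup\operatorname{ran}(h\restriction t(N_k))<t(\lambda)^+$. The $\omega$-sequence $\langle\beta_k\mid k<\omega\rangle$ lies in $V[R]$, hence (taking $N=1$ above) in $V[R_{<1}]$, a small extension where $t(\lambda)^+$ is regular; therefore $\beta_*:=\sup_k\beta_k<t(\lambda)^+$, and $\operatorname{ran}(h)\subseteq\beta_*$ contradicts surjectivity. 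Thus $t(\lambda)^+$ is preserved for every limit $\lambda<\vartheta$.

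Granting all of this, the infinite cardinals of $V[G_t]$ below $t(\alpha)$ are precisely $\aleph_0,\aleph_1$ together with $\{t(i),t(i)^+,t(i)^{++}\mid i<\alpha\}$, listed in increasing order; each of the $\alpha$ blocks contributes three cardinals, so this set has order type $2+\sum_{i<\alpha}3$, which equals $\alpha$ because $\alpha$ is a limit. Hence $t(\alpha)=\aleph_\alpha^{V[G_t]}$, and then $\aleph_{\alpha+1}^{V[G_t]}$ is by definition the $V[G_t]$-successor $t(\alpha)^+$. I expect the main obstacle to be exactly the preservation of the singular successor $t(\alpha)^+$ at limit $\alpha$ carried out in the third paragraph; everything else is the routine bookkeeping of the closure and chain condition of the individual Easton blocks.
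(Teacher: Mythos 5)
Your proposal is correct, but there is nothing in the paper to compare it against line by line: the authors simply write ``The next claim is clear'' and give no proof. What you have written is precisely the verification they leave implicit, and you have correctly isolated the one point that is not mere bookkeeping, namely the preservation of $(t(\lambda)^{+})^{V}$ for limit $\lambda$, where $t(\lambda)$ is singular of cofinality $\omega$ and the initial segment of the product has size $t(\lambda)^{+}$, so no chain condition is available. Your treatment of that point --- reflecting $h\restriction t(N_k)$ into the small extension $V[R_{<N_k}]$ via the $t(N_k)^{++}$-distributivity of the closed tail, bounding each $\beta_k$ by regularity of $t(\lambda)^{+}$ in the small extension, and then reflecting the $\omega$-sequence $\langle\beta_k\mid k<\omega\rangle$ itself into $V[R_{<1}]$ --- is exactly the Magidor--Shelah/Neeman-style argument one would expect here, and it is sound; note also that your reduction to $R$ alone is legitimate since the two bottom collapses have size $\kappa_0$ and are therefore $\kappa_0^{+}$-c.c.\ over any model, while $E(t(\lambda)^{++},t(\lambda+1))\times B_\lambda$ is handled by Easton's lemma against the $t(\lambda)^{++}$-c.c.\ part. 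The ordinal arithmetic $2+3\cdot\alpha=\alpha$ for limit $\alpha$ is also right and is the reason the claim is stated only for limit ordinals.

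Two small inaccuracies, neither fatal. First, $\Col(\rho^{+},{<}\kappa_0)$ has cardinality $\kappa_0$, not ${<}\kappa_0$; what you actually need (and have) is that it is $\kappa_0$-c.c., so nothing at or above $\kappa_0$ is disturbed. Second, your parenthetical justification that $R_{[N,\lambda)}$ ``remains $t(N)^{++}$-closed over $V[R_{<N}]$'' via coordinatewise lower bounds is not quite right: $V[R_{<N}]$ may contain new descending sequences of ground-model conditions (of length $\geq t(0)^{++}$), and the coordinatewise union of such a sequence need not belong to $V$, hence need not be a condition of the ground-model poset. What is true, and is all your argument uses, is $t(N)^{++}$-\emph{distributivity} of $R_{[N,\lambda)}$ over $V[R_{<N}]$, which follows from Easton's lemma since $|R_{<N}|\leq t(N)^{+}$ makes $R_{<N}$ $t(N)^{++}$-c.c.\ while $R_{[N,\lambda)}$ is $t(N)^{++}$-closed in $V$. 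Finally, be explicit that GCH in the ground model is where $|A_\lambda|\leq t(\lambda)^{+}$ comes from at limits (it is exactly $t(\lambda)^{\aleph_0}=t(\lambda)^{+}$ for $\cf(t(\lambda))=\omega$); this hypothesis is consistent with the paper's setting but is genuinely used.
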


Assume $\alpha < \vartheta$ is a limit ordinal, so, by choice of  $t$, we have $s_{t(\alpha)} \subseteq t$. Also let $\langle \xi_n \mid n < \omega \rangle$ be
an increasing sequence of ordinals less than $\vartheta$ such that $t(\xi_n)=s_{t(\alpha)}(n)$ (thus $\xi_0=0$). Also set $\xi=\sup_{n<\omega}\xi_n \leq \vartheta.$ Note that
\[
t(\xi) = \sup_{n<\omega} t(\xi_n) = \sup_{n<\omega}s_{t(\alpha)}(n) = t(\alpha),
\]
so in fact $\xi=\alpha.$
Then we can write $\MPB_t$ as $\MPB_t = \MPB_{t, 1} \times \MPB_{t, 2}$
where
\[
\MPB_{t, 1} =  \Col(\omega, \rho) \times \Col(\rho^{+}, < \kappa_0) \times \prod_{i < \alpha}  E(t(i)^{++},  t(i+1))
\]
and
\[
\MPB_{t, 2} = \prod_{\alpha \leq i < \vartheta}  E(t(i)^{++},  t(i+1)).
\]
Also let $G_t=G_{t,1} \times G_{t,2}$ correspond to $\MPB_t = \MPB_{t, 1} \times \MPB_{t, 2}$.
We have
\[
\prod_{i < \alpha} E(t(i)^{++},  t(i+1)) = \prod_{n<\omega} \prod_{\xi_n \leq i < \xi_{n+1}} E(t(i)^{++},  t(i+1)).
\]
But, by Lemma \ref{lemma: projection Shioya}, for any $n<\omega,$ there is a natural projection
\[
\pi_n: E(s_{t(\alpha)}(n)^{++},  s_{t(\alpha)}(n+1)) \to \prod_{\xi_n \leq i < \xi_{n+1}} E(t(i)^{++},  t(i+1)),
\]
and so we have a projection
\[
\prod_{n<\omega}\pi_n:  \prod_{n<\omega} E(s_{t(\alpha)}(n)^{++},  s_{t(\alpha)}(n+1)) \to
  \prod_{n<\omega} \prod_{\xi_n \leq i < \xi_{n+1}} E(t(i)^{++},  t(i+1)).
\]
This induces a projection (in $V$)
\[
\pi: \MPB_{t(\alpha)} \to \MPB_{t, 1}.
\]

Let us split $\MPB_{t(\alpha)}$ into a product $\Col(\omega, \rho) \times \MQB_{t(\alpha)}$ where $\MQB_{t(\alpha)}$ is $\rho^+$-closed. Similarly, split $\MPB_{t,1} = \Col(\omega, \rho) \times \MQB_{t, 1}$. Let $H_{t,1}$ be the generic filter $G_{t,1}$ restricted to $\MQB_{t,1}$.

The quotient forcing $\mathbb{R}= \MPB_{t(\alpha)}/ G_{t, 1}$ is isomorphic to $\MQB_{t(\alpha)} / H_{t,1}$. By the continuity of the projection and the closure of the forcing notions, in $V[H_{t,1}]$, $\mathbb{R}$ is $\rho^{+}$-closed.

Let $H_{t,2}$ be a generic filter for $\mathbb{P}_{t,2}$. Let $G_\alpha$ be $\MPB_{t(\alpha)}$-generic over $V$ with $\pi[G_\alpha] = G_{t,1}.$ Let $R\subseteq \mathbb{R}$ and let $C\subseteq \Col(\omega, \rho)$ be generic filters such that $V[G_t] = V[C][H_{t, 1}][H_{t, 2}]$ and $V[G_\alpha] = V[H_{t,1}][C][R]$.

Now suppose that $T$ is a $t(\alpha)^+$-tree in $V[G_t].$  The forcing $\MPB_{t,2}$ is $t(\alpha)^{++}$-closed in $V$ and thus $t(\alpha)^{++}$-distributive in the generic extension by $\mathbb{P}_{t,1}$. So $T \in V[G_{t,1}].$ The tree property holds at $t(\alpha)^+$ in $V[G_\alpha]$, so  $T$ has a cofinal branch in $V[G_\alpha]$, and by applying Lemma \ref{lem: preservation lemma} in $V[H_{t, 1}]$, $\mathbb{R}$ cannot add a branch to  $T$. So we conclude that $T$ already has a branch in $V[G_{t, 1}] \subseteq V[G_t]$. So $T$ is not a $t(\alpha)^+$-Aronszajn tree in $V[G_t]$, as required.
\end{proof}

We close the paper with the following question.
\begin{question}
Is it consistent with $ZFC$ that the tree property holds at successor of every singular cardinal $?$
\end{question}

\subsection*{Acknowledgements}
The first author's research was in part supported by a grant from IPM (No. 95030417). The authors would like to thank the referee of the paper for his/her  helpful suggestions.

School of Mathematics, Institute for Research in Fundamental Sciences (IPM), P.O. Box:
19395-5746, Tehran-Iran.

E-mail address: golshani.m@gmail.com

The Hebrew University of Jerusalem, Einstein Institute of Mathematics,
Edmond J. Safra Campus, Givat Ram, Jerusalem 91904, Israel.

E-mail address:  yair.hayut@mail.huji.ac.il

\begin{thebibliography}{99}

%\bibitem{FontanellaMagidor2015} Fontanella, Laura; Magidor, Menachem, Reflection of stationary sets and the tree property at
%the successor of a singular cardinal, submitted.

%\bibitem{CummingsForeman1998} Cummings, James; Foreman, Matthew, The tree property. Adv. Math. 133 (1998), no. 1, 1-–32.

%\bibitem{FontanellaFriedman} Fontanella, Laura; Friedman, Sy David, The tree property at both $\aleph_{\omega+1}$ and $\aleph_{\omega+2}$. Fund. Math. 229 (2015), no. 1, 83-–100.

%\bibitem{kanamori} Kanamori, Akihiro, The higher infinite. Large cardinals in set theory from their beginnings. Second edition. Springer Monographs in Mathematics. Springer-Verlag, Berlin, 2003. xxii+536 pp.

%\bibitem{KunenTall}  Kunen, Kenneth; Tall, Franklin D., Between Martin's axiom and Souslin's hypothesis. Fund. Math. 102 (1979), no. 3, 173-–181.

%\bibitem{LambieHanson2015} Lambie-Hanson, Chris , Squares and Narrow Systems, submitted.

%\bibitem{MagidorShelah94} Magidor, Menachem; Shelah, Saharon, When does almost free imply free$?$ (For groups, transversals, etc.). J. Amer. Math. Soc. 7 (1994), no. 4, 769-–830.

\bibitem{MagidorShelah96} Magidor, Menachem; Shelah, Saharon, The tree property at successors of singular cardinals. Arch. Math. Logic 35 (1996), no. 5--6, 385-–404.

%\bibitem{Neeman2009} Neeman, Itay, Aronszajn trees and failure of the singular cardinal hypothesis. J. Math. Log. 9 (2009), no. 1, 139-–157.

\bibitem{Neeman2014} Neeman, Itay,  The tree property up to $\aleph_{\omega+1}$. J. Symb. Log. 79 (2014), no. 2, 429–-459.

\bibitem{shioya}
Shioya, Masahiro, \emph{The {E}aston collapse and a saturated filter}, {RIMS}
  K\^{o}ky\^{u}roku \textbf{1754} (2011), 108--114.

\bibitem{Sinapova2012a} Sinapova, Dima, The tree property at $\aleph_{\omega+1}$. J. Symbolic Logic 77 (2012), no. 1, 279–-290.


%\bibitem{Sinapova2012b} Sinapova, Dima, The tree property and the failure of the singular cardinal hypothesis at $\aleph_{\omega^2}$. J. Symbolic Logic 77 (2012), no. 3, 934–-946.

%\bibitem{Unger2012} Unger, Spencer, Fragility and indestructibility of the tree property. Arch. Math. Logic 51 (2012), no. 5-6, 635–-645.

%\bibitem{Unger2015} Unger, Spencer, Fragility and indestructibility II. Ann. Pure Appl. Logic 166 (2015), no. 11, 1110–-1122.

\end{thebibliography}
\end{document}